\theoremstyle{definition}
\theoremstyle{remark}
\theoremstyle{plain}
\newtheorem{thm}{Theorem}[section]
\newtheorem{lem}[thm]{Lemma}
\newcommand*{\house}[1]{%
  \mathord{%
    \mathpalette\@house{#1}%
  }%
}
\newcommand*{\@house}[2]{%
  \dimen@=\fontdimen8 %
      \ifx#1\scriptscriptstyle\scriptscriptfont
      \else\ifx#1\scriptstyle\scriptfont
      \else\textfont\fi\fi
      3 %
  \sbox0{%
    $#1%
      \vrule width\dimen@\relax
      \overline{%
        \kern2\dimen@
        \begingroup 
          #2%
        \endgroup
        \kern2\dimen@
      }%
      \vrule width\dimen@\relax
      \mathsurround=1.5\dimen@ 
    $%
  }%
  \ht0=\dimexpr\ht0-\dimen@\relax
  \dp0=\dimexpr\dp0+2\dimen@\relax
  \vbox{%
    \kern\dimen@ 
    \copy0 %
  }%
}
\begin{document}

\title[Series of reciprocals]{Arithmetic properties of series of reciprocals of algebraic integers}

\author{S. B. ANDERSEN}

\address{S. B. Andersen, Department of Mathematics, Aarhus University, Ny Munkegade 118,
  DK-8000 Aarhus C, Denmark}

\email{simon\_andersen94@hotmail.com}

\author{S. KRISTENSEN}

\address{S. Kristensen, Department of Mathematics, Aarhus University, Ny Munkegade 118,
  DK-8000 Aarhus C, Denmark}

\email{sik@math.au.dk}

\begin{abstract}
We obtain results bounding the degree of the series $\sum_{n=1}^{\infty} 1/\alpha_n$, where $\{\alpha_n\}$ is a sequence of  algebraic integers satisfying certain algebraic conditions and growth conditions. Our results extend results of Erd\H{o}s, Han\v{c}l and Nair.
\end{abstract}

\maketitle

\section{Introduction}

Questions on the irrationality and transcendence of infinite series have a long history. Often, the series have rational terms, and many famous open problems arise in this way. For instance, the Riemann $\zeta$-function is known to take transcendental values at even, positive integers and to be irrational at $3$. On the other hand, not much is known about the values at any other specified odd integer greater than $1$.

The $\zeta$-values are instances of the family of series
\begin{equation}
\label{eq:series}
\sum_{n=1}^{\infty} \frac{1}{\alpha_n},
\end{equation}
where $\alpha_n$ is an algebraic integer. Concretely, to obtain $\zeta(m)$, we let $\alpha_n = n^m$ in \eqref{eq:series}.

This makes it pertinent to study conditions on the sequence $\{\alpha_n\}$ which ensure irrationality or even transcendence of the series in \eqref{eq:series}. It was shown by Erd\H{o}s \cite{MR539489} that if the $\alpha_n$ are rational integers, $\alpha_n = a_n$, then the series of \eqref{eq:series} is irrational provided $\lim_{n \rightarrow \infty} a_n^{1/2^n} =\infty$. This condition of course falls short of saying anything at all about the $\zeta$-values. Nevertheless, it is essentially best possible, see \cite{MR3705763} for an example. The condition was weakened by Han\v{c}l \cite{MR2061123}, who proved that irrationality of the series \eqref{eq:series} is ensured by the condition 
$$
1 < \liminf_{n \rightarrow \infty}a_n^{1/2^n} <  \limsup_{n \rightarrow \infty}a_n^{1/2^n},
$$
where the $a_n$ are still rational integers.

It appears that not much is known when the $\alpha_n$ are no longer assumed to be rational integers. One result which does exist is due to Han\v{c}l and Nair \cite{MR3705763}. They showed that for $\alpha_n = \sqrt{a_n}$ with $a_n \in \mathbb{N}$, irrationality of the series \eqref{eq:series} is ensured by the condition
$$
\lim_{n \rightarrow \infty} a_n^{2^{-n^2/2}} = \infty.
$$

In the present paper, we develop the methods of Han\v{c}l and Nair to deal with the more general case where the $\alpha_n$ are algebraic integers such that $\vert \alpha_n \vert= \house{\alpha_n}$. Here $\house{\alpha}$ denotes the house of $\alpha$, i.e. the maximum modulus of the conjugates of $\alpha$. Examples for which this holds include Salem and Pisot numbers, but also $d$'th roots of rational integers, $\alpha_n = \sqrt[d]{a_n}$. As we will see, these are in a sense extremal cases for our results, and the former gives better bounds than the latter.

In addition to the irrationality results, the approach taken allows us to give lower bounds on the degree of the series \eqref{eq:series}, where we set the degree of a transcendental number equal to $\infty$.

\begin{thm}
\label{thm:main}
Let $d, D \in \mathbb{N}$, $\epsilon > 0$ and let $\{\alpha_n\}$ be a sequence of algebraic integers with $\max \deg \alpha_n = d$, such that $\vert \alpha_n \vert = \house{\alpha_n}$, such that $\vert \alpha_n\vert$ increases, such that $\vert \alpha_n \vert \ge n^{1+\epsilon}$ for $n$ sufficiently large, such that $\Re (\alpha_n) > 0$ or $\Im(\alpha_n) > 0$ for all $m \in \mathbb{N}$, and such that
$$
\limsup_{n \rightarrow \infty} \house{\alpha_n}^{\frac{1}{D^n \prod_{i=1}^{n-1}(d^i + d)}} = \infty.
$$
Then,
$$
\deg\left(\sum_{n=1}^\infty \frac{1}{\alpha_n}\right) > D.
$$
\end{thm}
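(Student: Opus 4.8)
The plan is to argue by contradiction: assume $\xi:=\sum_{n=1}^\infty 1/\alpha_n$ is algebraic of degree $e\le D$ (absolute convergence is guaranteed by $|\alpha_n|\ge n^{1+\epsilon}$), let $P\in\mathbb Z[x]$ be its primitive minimal polynomial, of degree $e$ and height $H$, and set $\xi_N=\sum_{n=1}^N 1/\alpha_n$, so $\xi_N\to\xi$. The aim is to show that $\deg\xi\le D$ forces $\log\house{\alpha_n}\le K\,D^n\prod_{i=1}^{n-1}(d^i+d)$ for some constant $K=K(\xi,d,D,\epsilon)$ and all large $n$, contradicting the limsup hypothesis. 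A preliminary observation: $P(\xi_N)\ne0$ for all large $N$. Indeed, reading the half-plane assumption as ``$\Re\alpha_n>0$ for all $n$, or $\Im\alpha_n>0$ for all $n$'', every $1/\alpha_n$ lies in one fixed open half-plane through the origin, so the tail $\xi-\xi_N=\sum_{n>N}1/\alpha_n$ is nonzero; since $\xi$ is a simple root of $P$ and $\xi_N\to\xi$, for $N$ large $\xi_N$ is near $\xi$ yet distinct from it, hence not a root of $P$.

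The heart of the matter is a two-sided estimate for $|P(\xi_N)|$. For the upper bound, $|P(\xi_N)|=|P(\xi_N)-P(\xi)|\ll_{P,\xi}|\xi-\xi_N|\le\sum_{n>N}1/|\alpha_n|$, and using that $|\alpha_n|$ is increasing together with $|\alpha_n|\ge n^{1+\epsilon}$ (splitting the tail at the index where $n^{1+\epsilon}$ overtakes $\house{\alpha_{N+1}}$) one gets $|P(\xi_N)|\ll_\epsilon\house{\alpha_{N+1}}^{-\delta}$ for a suitable $\delta=\delta(\epsilon)>0$. For the lower bound one uses $\house{\alpha_n}=|\alpha_n|$: write $1/\alpha_n=\tilde\alpha_n/a_n$ with $a_n=\mathrm{Nm}(\alpha_n)\in\mathbb Z\setminus\{0\}$ and $\tilde\alpha_n=a_n/\alpha_n$ an algebraic integer, so $|a_n|\le\house{\alpha_n}^d$ and $|1/\alpha_n^{(j)}|\le\house{\alpha_n}^{d-1}$ for every conjugate. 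Then $A_N:=\prod_{n\le N}a_n\in\mathbb Z$ is a denominator for $\xi_N$, $A_N^e P(\xi_N)$ is a nonzero algebraic integer of degree $\le\deg\xi_N\le d^N$, and all of its conjugates are controlled because the conjugates of $\xi_N$ are among the sums $\sum_{n\le N}1/\alpha_n^{(j_n)}$, of modulus $\le N\house{\alpha_N}^{d-1}$. Applying $|\mathrm{Nm}|\ge1$ to $A_N^e P(\xi_N)$ — equivalently, $\log|P(\xi_N)|\ge-\deg(P(\xi_N))\,h(P(\xi_N))$ with $h(P(\xi_N))\le e\,h(\xi_N)+O_{P,\xi}(1)$ and $h(\xi_N)\le\sum_{n\le N}h(\alpha_n)+N\log2\le\sum_{n\le N}\log\house{\alpha_n}+N\log2$ — produces a lower bound for $|P(\xi_N)|$ in terms of $|A_N|$, the conjugate bounds, $H$, $e$ and $d^N$.

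Comparing the two bounds, taking logarithms, and writing $g_n=\log\house{\alpha_n}$, $S_N=\sum_{n\le N}g_n$, one is led to a recursion of the shape $g_{N+1}\ll d^N S_N+(\text{lower order in }N)$ in which the multiplicative constant must be held down to essentially $e\le D$. One then runs an induction with the ansatz $g_n\le K\,w_n$, $w_n:=D^n\prod_{i=1}^{n-1}(d^i+d)$: since $w_n/w_{n-1}=D(d^{n-1}+d)$ grows, $\sum_{n\le N}w_n\le 2w_N$, hence $S_N\le 2Kw_N$; feeding this back gives $g_{N+1}\ll D\,d^N\cdot Kw_N\le K\,D(d^N+d)w_N=K\,w_{N+1}$, closing the induction and yielding the desired contradiction with the limsup hypothesis.

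The main obstacle is matching the exponent losses at the interface of the analytic and arithmetic inputs so that the per-step multiplier in the recursion is exactly $D(d^N+d)$: it must carry no spurious factor $1/\delta$ coming from the crude tail estimate, and no spurious factor of $d$ coming from $|a_n|\le\house{\alpha_n}^d$, from $|1/\alpha_n^{(j)}|\le\house{\alpha_n}^{d-1}$, or from merely counting the $\le d^N$ conjugates of $\xi_N$. Concretely, the tail bound $|\xi-\xi_N|\ll\house{\alpha_{N+1}}^{-\delta}$ has to be pushed — exploiting the monotonicity of $|\alpha_n|$ — to the point where the effective exponent of $\house{\alpha_{N+1}}$ is $1$, and the degree and house estimates for the partial sums must be organised so that precisely the factor $d^N+d$, rather than $d^{N+1}$, is gained at step $N$; it is exactly this bookkeeping that is responsible for the shape $D^n\prod_{i=1}^{n-1}(d^i+d)$ of the exponent, and it is the only non-routine part of the argument.
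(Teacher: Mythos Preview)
Your overall framework matches the paper's: argue by contradiction, compare a Liouville-type lower bound on $|\xi-\xi_N|$ (equivalently on $|P(\xi_N)|$) with an analytic upper bound coming from the tail $\sum_{n>N}1/\house{\alpha_n}$, and extract from this a growth constraint on $\log\house{\alpha_n}$ that contradicts the limsup hypothesis. Your norm argument for the lower bound is essentially the Liouville--Mignotte inequality the paper invokes, so there is no issue there.

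The gap is precisely the step you yourself flag as the ``main obstacle'' and then leave unresolved. With only $|\alpha_n|\ge n^{1+\epsilon}$ and monotonicity, the tail bound is $\sum_{n>N}1/\house{\alpha_n}\ll\house{\alpha_{N+1}}^{-\delta}$ with $\delta=\epsilon/(1+\epsilon)<1$ (this is the paper's Lemma~\ref{lem:calculus1}), and monotonicity alone cannot push the exponent to $1$: one genuinely needs extra input such as $\house{\alpha_n}\ge 2^n$. Consequently your induction step reads $g_{N+1}\le (D/\delta)\,d^N S_N+O(1)$, and closing it against $w_{N+1}=D(d^N+d)w_N$ would require $1/\delta\le 1+d^{1-N}$, which fails for all large $N$. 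So the direct induction $g_n\le K w_n$ does not go through as written; you have correctly located the difficulty but not removed it.

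The paper takes a different route to get around this. Rather than bounding $g_n$ for \emph{all} $n$ by induction, it uses a ``jump'' lemma (Lemma~\ref{lem:calculus2}) to produce infinitely many special indices $N$ at which $\house{\alpha_{N+1}}$ dominates a telescoping product of the earlier houses with exactly the right exponent, and it derives the contradiction to the critical inequality~\eqref{eq:critical} only at those $N$. To make the tail exponent effectively $1$ where needed, the paper splits into three growth regimes for $\house{\alpha_n}$: a very-fast-growth case where even the crude $\delta$-bound suffices (at the cost of replacing $d$ by $d+1$ in the product), a case with $\house{\alpha_n}\ge 2^n$ where the tail is $\ll(\log\house{\alpha_{N+1}})/\house{\alpha_{N+1}}$, and a mixed case handled by choosing three auxiliary indices $k\le r\le s$. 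This case analysis is exactly the ``bookkeeping'' you allude to; it is not routine, and your sketch does not supply it.
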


Note that if we let $D=d=1$, the conditions of the theorem imply that $\alpha_n$ is a sequence of integers, such that 
$$
\limsup_{n \rightarrow \infty} \vert \alpha_n\vert^{\frac{1}{2^{n-1}}} = \infty.
$$
If the $\alpha_n= a_n$ are assumed to be natural numbers in this case, we recover the result of Erd\H{o}s on noting that 
$$
\limsup_{n \rightarrow \infty} a_n^{\frac{1}{2^{n-1}}} = \sqrt{\limsup_{n \rightarrow \infty} a_n^{\frac{1}{2^{n}}}}.
$$
Similarly, we may recover the result of Han\v{c}l and Nair by letting $D = 1$ and $\alpha_n = \sqrt{a_n}$ with their growth condition on the $a_n$.

Many of the assumptions made in Theorem \ref{thm:main} can be weakened at the cost of making the growth condition on the houses of the $\alpha_n$ more complicated. In the final section of the paper, we will comment on the various conditions and the extent to which they can be weakened.

\section{Auxiliary results}

We will make heavy use of Weil heights and Mahler measures of algebraic numbers. We recall the definitions. 

Let $\alpha$ be an algebraic number, let $K$ be a number field containing $\alpha$ and let $M_K$ denote the set of places of $K$. Then, the (Weil) height of $\alpha$ is defined as
$$
H(\alpha) = \prod_{\nu \in M_K} \max\{1,\vert \alpha \vert_\nu\}^{d_\nu/d},
$$
where $d = [K : \mathbb{Q}]$ and $d_\nu = [K_\nu : \mathbb{Q}_\nu]$, and where $K_\nu$ and $\mathbb{Q}_\nu$ denote the completions of the fields at the place $\nu$. With the normalisation in the exponent, the height becomes independent of the field $K$.

We will also need the Mahler measure of $\alpha$. For this purpose, suppose that $\alpha$ is algebraic of degree $d$ and let $\alpha_1 = \alpha, \alpha_2, \dots, \alpha_d$ denote the conjugates of $\alpha$. Finally, let $a_d$ denote the leading coefficient of the minimal polynomial of $\alpha$ defined over $\mathbb{Z}$. The Mahler measure of $\alpha$ is defined as
$$
M(\alpha) = \vert a_d \vert \prod_{i=1}^d \max\{1, \vert \alpha_i \vert\}.
$$
Here, the only place playing a role is the usual Archimedean one, i.e. the modulus in the complex plane.

The following wonderful result is classical, see e.g. \cite{MR1756786}.

\begin{thm}
\label{thm:height_measure}
For an algebraic number $\alpha$ of degree $d$,
$$
H(\alpha) = M(\alpha)^{1/d}.
$$
\end{thm}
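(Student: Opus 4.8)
The plan is to argue by contradiction, following the Erd\H{o}s--Han\v{c}l--Nair strategy. If $S := \sum_{n\ge 1} 1/\alpha_n$ is transcendental there is nothing to prove, so suppose $S$ is algebraic of degree $D' \le D$, and write $S_N = \sum_{n=1}^N 1/\alpha_n$ and $R_N = S - S_N = \sum_{n>N} 1/\alpha_n$. For infinitely many $N$ chosen along the subsequence furnished by the $\limsup$ hypothesis, I would confront a Liouville-type lower bound for $|R_N|$ with an upper bound coming from the growth of $|\alpha_n|$, and read off that the two cannot hold simultaneously.

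Two preliminary points. First, non-vanishing: $R_N \ne 0$ for all large $N$. Since $R_N \to 0$, for large $N$ the number $S_N$ is too close to $S$ to be any conjugate of $S$ other than $S$ itself, while the hypothesis that $\Re(\alpha_n)>0$ or $\Im(\alpha_n)>0$ --- equivalently, that $1/\alpha_n$ avoids the closed second quadrant --- prevents the tail $R_N$ from collapsing to $0$; this needs only a short separate argument. Second, the upper bound: since $\sum_{n\le N}|\alpha_n|^{-1} \le \sum_n n^{-1-\epsilon}$ is bounded, $|S_N|$ is bounded uniformly in $N$; and splitting the tail at $n \asymp \house{\alpha_{N+1}}^{1/(1+\epsilon)}$ and using that $|\alpha_n|$ increases with $|\alpha_n| = \house{\alpha_n} \ge n^{1+\epsilon}$ gives $|R_N| \ll_{\epsilon} \house{\alpha_{N+1}}^{-\delta}$, where $\delta = \epsilon/(1+\epsilon)$. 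So $|R_N|$ is minuscule precisely when $\house{\alpha_{N+1}}$ is huge, which is the regime the $\limsup$ hypothesis provides.

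For the lower bound I would set $P_N = \prod_{n\le N}\alpha_n$ and $Q_N = P_N S_N = \sum_{m\le N}\prod_{n\ne m}\alpha_n$, both algebraic integers in $L = \mathbb{Q}(\alpha_1,\dots,\alpha_N)$, a number field of degree at most $d^N$. With $g\in\mathbb{Z}[X]$ a minimal polynomial of $S$, the number $\gamma_N := P_N^{D'}g(S_N)$ is an algebraic integer of $L$, nonzero for large $N$, so $|N_{L/\mathbb{Q}}(\gamma_N)| \ge 1$. To turn this into a bound on $|R_N| = |S - S_N|$ one estimates the archimedean conjugates of $\gamma_N$: every conjugate of $\alpha_n$ has modulus at most $\house{\alpha_n}$, and --- this is where $|\alpha_n| = \house{\alpha_n}$ and the algebraic-integer hypothesis enter together --- the product of the moduli of the conjugates of $\alpha_n$ is at least $1$, so no conjugate is smaller than $\house{\alpha_n}^{-(d-1)}$, whence each conjugate of $S_N$ is bounded polynomially in $\prod_{n\le N}\house{\alpha_n}$. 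Together with $g(S_N) = g(S_N)-g(S) = (S_N-S)\cdot O(1)$, and with $H(\alpha_n) \le \house{\alpha_n}$ --- which follows from Theorem \ref{thm:height_measure} and $M(\alpha_n) = \prod_i \max\{1,|\alpha_n^{(i)}|\} \le \house{\alpha_n}^{\deg\alpha_n}$, the leading coefficient being $1$ since $\alpha_n$ is an algebraic integer --- this yields a bound of the form $|R_N| \gg \big(C(N)\prod_{n\le N}\house{\alpha_n}\big)^{-c\,d^N}$ for a constant $c = c(d,D)$ and a mild factor $C(N)$.

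Comparing the two estimates, for all large $N$ one gets an inequality of the shape $\house{\alpha_{N+1}}^{\delta} \le \big(C(N)\prod_{n\le N}\house{\alpha_n}\big)^{c\,d^N}$, and the remaining task --- which I expect to be the crux --- is to contradict $\limsup_n \house{\alpha_n}^{1/f(n)} = \infty$, where $f(n) = D^n\prod_{i=1}^{n-1}(d^i+d)$. I would fix a large parameter $A$, take $N+1$ to be the first index with $\house{\alpha_{N+1}} > A^{f(N+1)}$ (so that $\house{\alpha_n} \le A^{f(n)}$ for every $n \le N$), use this to bound $\prod_{n\le N}\house{\alpha_n}$, substitute into the displayed inequality, and check that the exponents become incompatible once $A$ is large enough. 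The success of this final balancing hinges on the recursion $f(n+1) = D(d^n+d)f(n)$ satisfied by $f$ and on $\sum_{n\le N}f(n) \ll f(N)$, matched against the factor $d^N$ and the constant $c$ emerging from the Liouville estimate; getting exactly the exponent $f(n)$ to suffice --- rather than an inflated version of it, and in particular absorbing the loss caused by $\delta<1$ and by the crude bound $[L:\mathbb{Q}]\le d^N$ --- is the delicate part, and may well be what forces the precise (and, as the authors indicate, not optimal) shape of the growth condition. The specialisations $D=d=1$ and $\alpha_n=\sqrt{a_n}$, which must reproduce the theorems of Erd\H{o}s and of Han\v{c}l--Nair recorded after the statement, provide a convenient check on the exponents throughout.
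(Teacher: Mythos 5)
Your proposal does not prove the statement in question. The statement is the classical identity $H(\alpha) = M(\alpha)^{1/d}$ relating the Weil height to the Mahler measure; what you have written instead is a strategy sketch for the paper's main theorem (the lower bound on the degree of $\sum_{n} 1/\alpha_n$), following the Erd\H{o}s--Han\v{c}l--Nair contradiction scheme. Nothing in your text engages with the definition of $H$ as a product over places or with the factorisation of $M(\alpha)$; indeed you explicitly \emph{use} Theorem \ref{thm:height_measure} as an ingredient (``which follows from Theorem \ref{thm:height_measure} and $M(\alpha_n)\le \house{\alpha_n}^{\deg \alpha_n}$\dots''), so read as a proof of that theorem the argument would be circular. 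For what it is worth, the paper itself offers no proof either: it records the result as classical and cites Waldschmidt.

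If you wanted to actually prove the identity, the outline is as follows. Take $K=\mathbb{Q}(\alpha)$, so $d=[K:\mathbb{Q}]$, and split the product defining $H(\alpha)^d=\prod_{\nu\in M_K}\max\{1,|\alpha|_\nu\}^{d_\nu}$ into archimedean and non-archimedean parts. The archimedean places of $K$, counted with their local degrees $d_\nu$, correspond to the embeddings of $K$ into $\mathbb{C}$, i.e.\ to the conjugates $\alpha_1,\dots,\alpha_d$, so the archimedean part equals $\prod_{i=1}^d \max\{1,|\alpha_i|\}$. For the finite part one shows $\prod_{\nu\nmid\infty}\max\{1,|\alpha|_\nu\}^{d_\nu}=|a_d|$, where $a_d$ is the leading coefficient of the minimal polynomial of $\alpha$ over $\mathbb{Z}$; this is the step requiring an argument, via Gauss's lemma (or the product formula applied to the primitive minimal polynomial), relating the denominators of $\alpha$ at finite places to the content and leading coefficient. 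Multiplying the two parts gives $H(\alpha)^d=|a_d|\prod_i\max\{1,|\alpha_i|\}=M(\alpha)$, which is the claim. Your material on partial sums, Liouville estimates and the growth condition belongs to the proof of Theorem \ref{thm:main}, not here.
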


We would further like to relate the house of an algebraic integer $\alpha$ to the height of $\alpha$. The following lemma accomplishes this.

\begin{lem}
\label{lem:height_house_measure}
Let $\alpha$ be an algebraic integer of degree $d$. Then,
$$
H(\alpha) = M(\alpha)^{1/d} \le \house{\alpha} \le M(\alpha) = H(\alpha)^d.
$$
The inequalities are best possible.
\end{lem}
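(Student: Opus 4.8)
The plan is to reduce everything to the definition of the Mahler measure together with Theorem \ref{thm:height_measure}. Since $\alpha$ is an algebraic integer, the leading coefficient $a_d$ of its minimal polynomial equals $1$, so $M(\alpha) = \prod_{i=1}^d \max\{1, \vert\alpha_i\vert\}$; combined with Theorem \ref{thm:height_measure} this already yields the two equalities $H(\alpha) = M(\alpha)^{1/d}$ and $M(\alpha) = H(\alpha)^d$. It then remains only to squeeze $\house{\alpha} = \max_i \vert\alpha_i\vert$ between $M(\alpha)^{1/d}$ and $M(\alpha)$.

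First I would record that $\house{\alpha} \ge 1$ whenever $\alpha \neq 0$ (the case $\alpha = 0$ being trivial, as then $d=1$ and everything is $0$ or $1$ depending on convention): the constant term of the minimal polynomial of $\alpha$ equals $\pm\prod_{i=1}^d \alpha_i$, a nonzero rational integer, so $\house{\alpha}^d \ge \prod_{i=1}^d \vert\alpha_i\vert \ge 1$. This is the one point at which the integrality hypothesis is genuinely used.

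For the lower inequality, since $1 \le \house{\alpha}$ and $\vert\alpha_i\vert \le \house{\alpha}$ for every $i$, each factor satisfies $\max\{1,\vert\alpha_i\vert\} \le \house{\alpha}$, whence $M(\alpha) \le \house{\alpha}^d$ and thus $M(\alpha)^{1/d} \le \house{\alpha}$. For the upper inequality, pick an index $j$ with $\vert\alpha_j\vert = \house{\alpha}$; then $\max\{1,\vert\alpha_j\vert\} = \house{\alpha}$, while every remaining factor is at least $1$, so $\house{\alpha} \le M(\alpha)$.

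Finally, to see that neither inequality can be improved, I would exhibit extremal examples. Any rational integer $a$ with $\vert a\vert \ge 1$ has $d=1$ and $\house{a} = \vert a\vert = M(a)$, so both inequalities become equalities at once. For a non-trivial witness making the left inequality sharp but not the right, take $\alpha = \sqrt{2}$, whose conjugates $\pm\sqrt{2}$ have equal modulus $\sqrt{2}$, so $M(\alpha)^{1/2} = \sqrt{2} = \house{\alpha}$ while $M(\alpha) = 2$. For a witness making the right inequality sharp but not the left, take a Pisot number such as the golden ratio $(1+\sqrt{5})/2$, all of whose conjugates other than $\alpha$ lie in the open unit disc, so $\house{\alpha} = \alpha = M(\alpha)$ while $M(\alpha)^{1/2} < \alpha$. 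I do not anticipate any real obstacle in the argument; the only subtlety worth flagging is the bound $\house{\alpha} \ge 1$, which can fail for general algebraic numbers and is precisely what the integrality assumption buys us.
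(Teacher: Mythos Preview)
Your proof is correct and follows essentially the same route as the paper: reduce to $M(\alpha)=\prod_i\max\{1,|\alpha_i|\}$ via integrality, use $\house{\alpha}\ge 1$ to sandwich the house between $M(\alpha)^{1/d}$ and $M(\alpha)$, and exhibit $d$'th roots of integers and Pisot numbers for sharpness. The only cosmetic differences are that you spell out the reason for $\house{\alpha}\ge 1$ via the constant term (the paper merely asserts it) and you give specific witnesses ($\sqrt{2}$, the golden ratio) where the paper cites the general families.
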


\begin{proof}
The outermost equalities follow immediately from Theorem \ref{thm:height_measure}. For the inequalities, note first that since $\alpha$ is assumed to be an algebraic integer, $a_d = 1$, so that 
$$
M(\alpha) = \prod_{i=1}^d \max\{1, \vert \alpha \vert\}.
$$
Note also that $\house{\alpha} \ge 1$, as $\alpha$ is an algebraic integer. The first inequality now just states that the geometric mean of a set of positive reals is upper bounded by the maximum value, while the second one is even more trivial.

To see that the inequalities are best possible, consider first $\alpha = \sqrt[d]{a}$ with $a$ a positive integer. The minimal polynomial of $\alpha$ is (a factor of) $P(X) = X^d - a$, so that all conjugates of $\alpha$ are of the form $\zeta_d \sqrt[d]{a}$, where $\zeta_d$ is a $d$'th root of unity. Hence, $M(\alpha) = (\sqrt[d]{a})^d = a$. On the other hand, evidently $\house{\alpha} = \sqrt[d]{a}$, so that the first inequality of the lemma is an equality.

Conversely, suppose that $\alpha$ is a Salem or Pisot number of degree $d$. Then, $M(\alpha) = \house{\alpha} = \alpha$, which proves optimality of the second inequality.
\end{proof}

The optimality of the inequalities above justify the remarks in the introduction about the extremal cases of our results. If we are able to get a better-than-expected bound on the Mahler measure of certain numbers in our construction, the resulting growth conditions can be significantly weakened. Our main theorem uses only the lower bound, which is valid in general. However, it should be clear from the remainder of the paper how the arguments may be modified if one has a series of reciprocals of Pisot and Salem numbers.

We will need to know that the height remains unchanged on taking the reciprocal. This is also classical, see \cite{MR1756786}.

\begin{lem}
\label{lem:reciprocal}
Let $\alpha$ be a non-zero algebraic number. Then, $H(\alpha) = H(1/\alpha)$.
\end{lem}

We will  be using the following bounds on heights and degrees of sums of algebraic numbers.

\begin{lem}
\label{lem:elementary}
Let $n \in \mathbb{N}$, and let $\beta_1, \dots, \beta_n$ be algebraic numbers. Then,
$$
H\left(\sum_{i=1}^n \beta_i\right) \le 2^n \prod_{i=1}^n H(\beta_i),
$$
and
$$
\deg\left(\sum_{i=1}^n \beta_i\right) \le \prod_{i=1}^n \deg(\beta_i).
$$
\end{lem}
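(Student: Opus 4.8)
This is a standard fact, but let me sketch the natural proof.

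For the height bound, the plan is to reduce to the case $n=2$ by induction, since $\sum_{i=1}^n\beta_i = (\sum_{i=1}^{n-1}\beta_i) + \beta_n$ and the factor $2^n = 2\cdot 2^{n-1}$ multiplies correctly. So the core is to show $H(\beta+\gamma)\le 2H(\beta)H(\gamma)$ for two algebraic numbers. Working in a number field $K$ containing both $\beta$ and $\gamma$, I would estimate place by place using the definition $H(\alpha)=\prod_{\nu\in M_K}\max\{1,|\alpha|_\nu\}^{d_\nu/d}$. For a non-Archimedean place $\nu$, the ultrametric inequality gives $|\beta+\gamma|_\nu\le\max\{|\beta|_\nu,|\gamma|_\nu\}$, hence $\max\{1,|\beta+\gamma|_\nu\}\le\max\{1,|\beta|_\nu\}\max\{1,|\gamma|_\nu\}$. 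For an Archimedean place, $|\beta+\gamma|_\nu\le|\beta|_\nu+|\gamma|_\nu\le 2\max\{|\beta|_\nu,|\gamma|_\nu\}$, so $\max\{1,|\beta+\gamma|_\nu\}\le 2\max\{1,|\beta|_\nu\}\max\{1,|\gamma|_\nu\}$. Taking the product over all places with the weights $d_\nu/d$ and using $\sum_{\nu\mid\infty}d_\nu/d = 1$, the Archimedean factors of $2$ combine to exactly $2$, giving $H(\beta+\gamma)\le 2H(\beta)H(\gamma)$. The induction then yields the stated bound; one should be slightly careful that the same field $K$ can be used throughout, which is fine since the height is field-independent and one may take $K=\mathbb{Q}(\beta_1,\dots,\beta_n)$.

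For the degree bound, the plan is again induction on $n$, the case $n=1$ being trivial. Setting $\beta=\sum_{i=1}^{n-1}\beta_i$ and $\gamma=\beta_n$, it suffices to show $\deg(\beta+\gamma)\le\deg(\beta)\deg(\gamma)$. This follows from elementary field theory: $\beta+\gamma$ lies in the compositum $\mathbb{Q}(\beta,\gamma)$, and $[\mathbb{Q}(\beta,\gamma):\mathbb{Q}]\le[\mathbb{Q}(\beta):\mathbb{Q}]\,[\mathbb{Q}(\gamma):\mathbb{Q}]=\deg(\beta)\deg(\gamma)$ by the tower law together with the fact that adjoining $\gamma$ to $\mathbb{Q}(\beta)$ cannot raise the degree above $\deg(\gamma)$. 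Since $\mathbb{Q}(\beta+\gamma)\subseteq\mathbb{Q}(\beta,\gamma)$, its degree divides (in particular is at most) $\deg(\beta)\deg(\gamma)$, and by the inductive hypothesis $\deg(\beta)\le\prod_{i=1}^{n-1}\deg(\beta_i)$, so $\deg(\sum_{i=1}^n\beta_i)\le\prod_{i=1}^n\deg(\beta_i)$.

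I do not expect a genuine obstacle here; the only mild subtlety is bookkeeping with the local degrees $d_\nu$ in the Archimedean estimate to confirm that the accumulated constant is exactly $2^n$ and not something larger. Everything else is the ultrametric inequality, the triangle inequality, and the tower law.
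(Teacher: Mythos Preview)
Your argument is correct and is precisely the standard proof one finds in the references the paper cites; the paper itself does not give a proof but simply refers to Waldschmidt for the height inequality and to Isaacs for the degree inequality. One minor remark: your induction in fact yields the sharper constant $2^{n-1}$ (applying the two-term bound $n-1$ times), which is of course $\le 2^n$, so the worry about the constant being too large is unfounded.
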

For a proof of the first inequality, see \cite{MR1756786}. The second inequality can be found in  \cite{MR0258803}.

A final ingredient from the theory of algebraic numbers needed, is the Liouville--Mignotte \cite{liouville, MR554376} bounds on the distance between algebraic numbers. For a unified proof of the inequalities, see appendix A of \cite{MR2136100}

\begin{lem}
\label{lem:liouville}
Let $\alpha$ and $\beta$ be non-conjugate algebraic numbers. Then,
$$
\vert \alpha - \beta \vert \ge \frac{1}{2^{\deg(\alpha)\deg(\beta)}M(\alpha)^{\deg(\beta)}M(\beta)^{\deg(\alpha)}}.
$$
\end{lem}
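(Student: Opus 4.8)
The plan is to deduce the inequality from the arithmetic of the resultant of the two minimal polynomials. Write $f, g \in \mathbb{Z}[X]$ for the minimal polynomials of $\alpha$ and $\beta$ over $\mathbb{Z}$, with leading coefficients $a$ and $b$, and set $m = \deg(\alpha)$, $n = \deg(\beta)$. Let $\alpha = \alpha_1, \dots, \alpha_m$ be the conjugates of $\alpha$ and $\beta = \beta_1, \dots, \beta_n$ those of $\beta$. The crucial arithmetic input is that $R = \operatorname{Res}(f,g)$ is an integer, and that it is \emph{nonzero}: because $\alpha$ and $\beta$ are not conjugate, $f$ and $g$ are distinct irreducible polynomials over $\mathbb{Q}$, hence coprime in $\mathbb{Q}[X]$ and sharing no root in $\overline{\mathbb{Q}}$. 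Consequently $|R| \ge 1$.

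The second step is to feed this into the product formula
$$
R = a^n b^m \prod_{i=1}^m \prod_{j=1}^n (\alpha_i - \beta_j),
$$
which together with $|R| \ge 1$ gives $1 \le |a|^n |b|^m \prod_{i,j} |\alpha_i - \beta_j|$. I then isolate the single factor indexed by $(i,j) = (1,1)$, which is exactly $|\alpha - \beta|$, and bound each of the remaining $mn-1$ factors from above by the elementary estimate $|\alpha_i - \beta_j| \le |\alpha_i| + |\beta_j| \le 2\max\{1,|\alpha_i|\}\max\{1,|\beta_j|\}$. Since every factor $\max\{1,|\alpha_i|\}$ and $\max\{1,|\beta_j|\}$ is at least $1$, reinstating the missing $(1,1)$ term only weakens the bound, so
$$
\prod_{(i,j)\ne(1,1)} |\alpha_i - \beta_j| \le 2^{mn-1}\Big(\prod_{i=1}^m \max\{1,|\alpha_i|\}\Big)^{n}\Big(\prod_{j=1}^n \max\{1,|\beta_j|\}\Big)^{m}.
$$

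To conclude, I use the definition of the Mahler measure: $\prod_{i=1}^m \max\{1,|\alpha_i|\} = M(\alpha)/|a|$ and $\prod_{j=1}^n \max\{1,|\beta_j|\} = M(\beta)/|b|$. Substituting this and the previous display into $1 \le |a|^n|b|^m\,|\alpha - \beta|\prod_{(i,j)\ne(1,1)}|\alpha_i - \beta_j|$, the factors $|a|^n$ and $|b|^m$ cancel, leaving
$$
1 \le |\alpha - \beta|\,2^{mn-1} M(\alpha)^n M(\beta)^m \le |\alpha - \beta|\,2^{\deg(\alpha)\deg(\beta)} M(\alpha)^{\deg(\beta)} M(\beta)^{\deg(\alpha)},
$$
which rearranges to the asserted inequality (indeed with a spare factor $2^{-1}$).

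The one genuinely delicate point is the opening step — that $\operatorname{Res}(f,g)$ is a \emph{nonzero integer} — since that is precisely where the non-conjugacy hypothesis is used, and one should note that the degenerate cases ($\alpha$ or $\beta$ rational, i.e. $m=1$ or $n=1$) cause no trouble, as both the resultant identity and the Mahler measure formula hold verbatim in degree $1$. Everything after that is routine manipulation of absolute values. If one prefers to sidestep resultants, the same integer can be exhibited directly as $R = \pm\,b^{m}\prod_{j=1}^n f(\beta_j)$: this is a symmetric polynomial in $\beta_1,\dots,\beta_n$ with rational coefficients, and multiplying by $b^{m}$ clears denominators by the fundamental theorem of symmetric polynomials together with $b\,e_k(\beta)\in\mathbb{Z}$, while $R\ne 0$ because no $\beta_j$ is a root of the irreducible polynomial $f\ne g$.
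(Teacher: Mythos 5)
The paper does not prove this lemma at all --- it cites Liouville, Mignotte and Appendix A of Bugeaud, and the resultant argument you give is exactly the standard proof found in that cited source. Your proof is correct as written: the non-conjugacy hypothesis makes $\operatorname{Res}(f,g)$ a nonzero integer, the bound $\vert\alpha_i-\beta_j\vert \le 2\max\{1,\vert\alpha_i\vert\}\max\{1,\vert\beta_j\vert\}$ on the remaining factors is valid, the leading coefficients cancel against $M(\alpha)/\vert a\vert$ and $M(\beta)/\vert b\vert$, and you even obtain the slightly sharper constant $2^{\deg(\alpha)\deg(\beta)-1}$.
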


The final auxiliary results needed are essentially exercises in calculus. The first is the following, essentially proved in \cite{MR539489}.

\begin{lem}
\label{lem:calculus1}
Let $\{a_n\}$ be an increasing sequence of real numbers satisfying for some $\epsilon>0$, that $a_n > n^{1+\epsilon}$ for all $n \in \mathbb{N}$. Then,
$$
\sum_{n=k}^\infty \frac{1}{a_n} < \frac{2+1/\epsilon}{a_k^{\epsilon/(1+\epsilon)}}.
$$
\end{lem}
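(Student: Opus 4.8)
The plan is to break the tail $\sum_{n=k}^{\infty} 1/a_n$ at the cut-off $K=\lfloor a_k^{1/(1+\epsilon)}\rfloor$: on the initial block $k\le n\le K$ we exploit monotonicity (all $a_n\ge a_k$), and on the remainder $n>K$ we use only the polynomial lower bound $a_n>n^{1+\epsilon}$ together with an integral comparison. Before starting I would record the elementary facts needed. Writing $t=a_k^{1/(1+\epsilon)}$, the hypothesis $a_k>k^{1+\epsilon}$ gives $t>k$, hence $K\ge k\ge 1$ and $K\le t<K+1$; moreover $a_k=t^{1+\epsilon}>1$, and since $0<\epsilon/(1+\epsilon)<1$ we have $a_k^{-1}<a_k^{-\epsilon/(1+\epsilon)}$.

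For the initial block, monotonicity yields
$$\sum_{n=k}^{K}\frac{1}{a_n}\le\frac{K-k+1}{a_k}\le\frac{K}{a_k}\le\frac{t}{a_k}=t^{-\epsilon}=a_k^{-\epsilon/(1+\epsilon)},$$
where the second inequality uses $k\ge 1$ and the third uses $K\le t$. For the remainder I would replace $a_n$ by $n^{1+\epsilon}$, peel off the single term $n=K+1$, and compare the rest with an integral (the integrand $x^{-(1+\epsilon)}$ being positive and decreasing):
$$\sum_{n=K+1}^{\infty}\frac{1}{a_n}<\frac{1}{(K+1)^{1+\epsilon}}+\int_{K+1}^{\infty}\frac{dx}{x^{1+\epsilon}}=\frac{1}{(K+1)^{1+\epsilon}}+\frac{1}{\epsilon(K+1)^{\epsilon}}.$$
Since $K+1>t$, the first term is $<t^{-(1+\epsilon)}=a_k^{-1}<a_k^{-\epsilon/(1+\epsilon)}$ and the second is $<t^{-\epsilon}/\epsilon=a_k^{-\epsilon/(1+\epsilon)}/\epsilon$, so the remainder is $<(1+1/\epsilon)a_k^{-\epsilon/(1+\epsilon)}$. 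Adding the two contributions gives the claimed bound $(2+1/\epsilon)a_k^{-\epsilon/(1+\epsilon)}$, the inequality being strict because of the strict bound $a_n>n^{1+\epsilon}$ on the tail; convergence of the series is automatic from this estimate.

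There is no genuine obstacle here — it really is a calculus exercise — but the one point that needs care is the bookkeeping with the floor function. Taking the cut-off to be $\lfloor a_k^{1/(1+\epsilon)}\rfloor$ rather than the ceiling is what makes the initial block come out cleanly as $t/a_k$, and peeling off the term $n=K+1$ before invoking the integral comparison is what lets every estimate for the remainder be routed back to $t$ through the single inequality $K+1>t$; handled this way the constant is exactly $2+1/\epsilon$, whereas a cruder bound (e.g. $\lfloor t\rfloor\ge t/2$) would produce a spurious factor such as $2^{\epsilon}$.
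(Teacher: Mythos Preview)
Your proof is correct and follows essentially the same route as the paper's: split at $K=\lfloor a_k^{1/(1+\epsilon)}\rfloor$, bound the initial block by monotonicity, replace $a_n$ by $n^{1+\epsilon}$ on the tail, peel off one term, and compare with an integral. The only cosmetic difference is that you introduce the shorthand $t=a_k^{1/(1+\epsilon)}$ and route every tail estimate through the single inequality $K+1>t$, which makes the bookkeeping slightly tidier than in the paper.
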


Note that in Erd\H{o}s' proof, the numerator on the right hand side is left as an unspecified constant depending on $\epsilon$. We give a short proof that this constant may be replaced by $2+1/\epsilon$.

\begin{proof}
By assumption, $a_k^{1/(1+\epsilon)} > k$. Hence,
$$
\sum_{n=k}^\infty \frac{1}{a_n} = \sum_{n=k}^{[a_k^{1/(1+\epsilon)}]} \frac{1}{a_n} + \sum_{n=[a_k^{1/(1+\epsilon)} + 1]}^\infty \frac{1}{a_n} \le \frac{1}{a_k} \sum_{n=k}^{[a_k^{1/(1+\epsilon)}]} 1 + \sum_{n=[a_k^{1/(1+\epsilon)} + 1]}^\infty \frac{1}{n^{1+\epsilon}}. 
$$
For the first sum,
$$
\frac{1}{a_k} \sum_{n=k}^{[a_k^{1/(1+\epsilon)}]} 1 < \frac{a_k^{1/(1+\epsilon)}}{a_k} = \frac{1}{a_k^{\epsilon/(1+\epsilon)}}.
$$
The second sum is estimated by the first summand plus an integral. First,
$$
\sum_{n=[a_k^{1/(1+\epsilon)} + 1]}^\infty \frac{1}{n^{1+\epsilon}} = \frac{1}{[a_k^{1/(1+\epsilon)} + 1]^{1+\epsilon}} + \sum_{n=[a_k^{1/(1+\epsilon)} + 1] +1}^\infty \frac{1}{n^{1+\epsilon}}.
$$
Since $\epsilon/(1+\epsilon) < 1$, 
$$
a_k^{\epsilon/(1+ \epsilon)} < a_k \le [a_k^{1/(1+\epsilon)} + 1]^{1+\epsilon},
$$
so that
$$
\frac{1}{[a_k^{1/(1+\epsilon)} + 1]^{1+\epsilon}} < \frac{1}{a_k^{\epsilon/(1+\epsilon)}}.
$$
For the final sum,
$$
\sum_{n=[a_k^{1/(1+\epsilon)} + 1]+1}^\infty \frac{1}{n^{1+\epsilon}} < \int_{[a_k^{1/(1+\epsilon)} + 1]}^\infty \frac{\textnormal{d}x}{x^{1+\epsilon}} = \frac{1}{\epsilon ([a_k^{1/(1+\epsilon)} + 1])^\epsilon} \le \frac{1/\epsilon}{a_k^{\epsilon/(1+\epsilon)}}.
$$
The result follows on combining the inequalities.
\end{proof}

The second result is extracted from the proof of the main theorem in \cite{MR3705763}, where special cases are used a number of times. We deduce a general form here.

\begin{lem}
\label{lem:calculus2}
Let $\{a_n\}_{n=1}^\infty$ be a sequence of real numbers such that 
$$
\limsup_{n \rightarrow \infty} a_n = \infty.
$$
Then for infinitely many $k \in \mathbb{N}$,
$$
a_{k+1} > \left(1+\frac{1}{k^2}\right) \max_{1 \le n \le k} a_n.
$$
\end{lem}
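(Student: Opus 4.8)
The plan is to argue by contradiction, converting the asserted multiplicative gap between consecutive running maxima into a telescoping product that would force the running maxima to stay bounded.

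First I would set $M_k = \max_{1 \le n \le k} a_n$ and record the basic facts: the sequence $\{M_k\}$ is non-decreasing, and since $\limsup_{n\to\infty} a_n = \infty$ it cannot be bounded, so $M_k \to \infty$; in particular $M_k > 0$ for all $k$ beyond some index $k_0$. Next, suppose the conclusion of the lemma is false. Then there is an index $k_1 \ge k_0$ such that for every $k \ge k_1$ we have $a_{k+1} \le \left(1 + \tfrac{1}{k^2}\right) M_k$. Since $1 + \tfrac1{k^2} \ge 1$ and $M_k > 0$, this gives $M_{k+1} = \max\{M_k, a_{k+1}\} \le \left(1 + \tfrac{1}{k^2}\right) M_k$ for all $k \ge k_1$.

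Iterating this inequality from $k_1$ up to $k-1$ yields $M_k \le M_{k_1} \prod_{j=k_1}^{k-1}\left(1 + \tfrac{1}{j^2}\right)$ for all $k > k_1$. The infinite product $\prod_{j=1}^\infty \left(1 + \tfrac1{j^2}\right)$ converges, since $\sum_{j=1}^\infty \log\!\left(1 + \tfrac1{j^2}\right) \le \sum_{j=1}^\infty \tfrac1{j^2} < \infty$. Hence the right-hand side is bounded uniformly in $k$, so $\{M_k\}$ is bounded, contradicting $M_k \to \infty$. This contradiction proves that $a_{k+1} > \left(1 + \tfrac1{k^2}\right)\max_{1\le n \le k} a_n$ for infinitely many $k$.

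I do not anticipate a genuine obstacle here; the only points requiring a modicum of care are checking that the running maxima are eventually strictly positive (so that the multiplicative iteration is legitimate) and invoking convergence of $\prod(1 + j^{-2})$ rather than merely of $\sum j^{-2}$. One could alternatively phrase the argument additively by passing to $\log M_k$ wherever $M_k>0$, but the multiplicative form above seems cleanest.
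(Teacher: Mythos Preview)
Your proof is correct and follows essentially the same route as the paper's: assume the inequality fails for all large $k$, iterate to bound the running maximum by $M_{k_1}\prod_j(1+j^{-2})$, and invoke convergence of the product to contradict $\limsup a_n=\infty$. If anything, your version is slightly more careful than the paper's in explicitly checking that the running maxima are eventually positive before multiplying through.
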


\begin{proof}
Suppose to the contrary that an $N_0 \in \mathbb{N}$ exists, such that for $k \ge N_0$, 
$$
a_{k+1} \le \left(1+\frac{1}{k^2}\right) \max_{1 \le n \le k} a_n.
$$
Then, for any $k \ge N_0$, 
\begin{alignat*}{2}
a_{k+1} &\le  \left(1+\frac{1}{k^2}\right) \max_{1 \le n \le k} a_n \\
&\le \left(1+\frac{1}{k^2}\right)\left(1+\frac{1}{(k-1)^2}\right) \max_{1 \le n \le k-1} a_n \\
&\le \cdots \\
&\le \prod_{n=N_0+1}^k \left(1+\frac{1}{n^2}\right) \max_{1 \le n \le N_0} a_n \\
&\le \prod_{n=1}^\infty \left(1+\frac{1}{n^2}\right) \max_{1 \le n \le N_0} a_n,
\end{alignat*}
which is finite, and so in contradiction with our assumption.
\end{proof}

\section{Proofs of the main theorem}

We now prove our main theorem. We argue by contradiction. A preliminary observation, which is used at several points in the proof, is a lower bound on the tail of the defining series. Thus, we let.
$$
\gamma = \sum_{n=1}^{\infty} \frac{1}{\alpha_n},
$$
and assume that $\gamma$ is algebraic of degree at nost $D$. Also, let 
$$
\gamma_N = \sum_{n=1}^{N} \frac{1}{\alpha_n}
$$
be the $N$'th partial sum of the series. Clearly, this is an algebraic number, and we estimate its degree and Mahler measure by Lemmas \ref{lem:elementary} and \ref{lem:reciprocal}. First,
$$
\deg(\gamma_N) \le \prod_{i=n}^N \deg\left(\frac{1}{\alpha_n}\right) = \prod_{i=n}^N \deg\left(\alpha_n \right) =: D_N.
$$
For the Mahler measure, we get that
\begin{multline*}
M(\gamma_N) = H(\gamma_N)^{D_N} \le \left(2^N \prod_{n=1}^N H \left(\frac{1}{\alpha_n}\right)\right)^{D_N} \\
= \left(2^N \prod_{n=1}^N H \left(\alpha_n\right)\right)^{D_N} \le \left(2^N \prod_{n=1}^N  \house{\alpha_n}\right)^{D_N},
\end{multline*}
where the latter equality follows from Lemma \ref{lem:height_house_measure}. 

Now, let
$$
\gamma(N) = \sum_{n=N+1}^\infty \frac{1}{\alpha_n} = \gamma - \gamma_N.
$$
Note that by our assumption that $\Re(\alpha_n) > 0$ or $\Im(\alpha_n) > 0$ for all $n\in \mathbb{N}$, $\gamma(N) \neq 0$ for all $N \in \mathbb{N}$. Hence, using the above estimates and Lemma \ref{lem:liouville},
\begin{alignat*}{2}
\vert(\gamma(N))\vert &= \vert \gamma - \gamma_N \vert \\
&\ge \frac{1}{2^{\deg(\gamma)\deg(\gamma_N)}M(\gamma)^{\deg(\gamma_N)}M(\gamma_N)^{\deg(\gamma)}}\\
&\ge \frac{1}{2^{D \cdot D_N}M(\gamma)^{D_N} \left(2^N \prod _{n=1}^N  \house{\alpha_n}\right)^{DD_N}}\\
& = \frac{1}{\left(2^{N +1}H(\gamma) \prod _{n=1}^N  \house{\alpha_n}\right)^{DD_N}}.
\end{alignat*}
The upshot is the following critical estimate, valid for all $N \in \mathbb{N}$,
\begin{equation}
\label{eq:critical}
\vert(\gamma(N))\vert \left(2^{N +1}H(\gamma) \prod _{n=1}^N  \house{\alpha_n}\right)^{D D_N} \ge 1.
\end{equation}

At this point, the path to a contradiction is clear. All we need to do is to ensure that our condition on the growth of the $\alpha_n$ implies that equation \eqref{eq:critical} is violated for arbitrarily large values of $N$. This is accomplished by obtaining an upper bound on $\vert \gamma(N) \vert$. Note that
\begin{equation}
\label{eq:house}
\vert \gamma(N) \vert = \left\vert \sum_{n=N+1}^\infty \frac{1}{\alpha_n} \right\vert \le \sum_{n=N+1}^\infty \frac{1}{\vert \alpha_n \vert} = \sum_{n=N+1}^\infty \frac{1}{\house{\alpha_n}}.
\end{equation}
Consequently, in order to arrive at a contradiction, it suffices to prove that 
\begin{equation}
\label{eq:general}
\sum_{n=N+1}^\infty \frac{1}{\house{\alpha_n}} \left(2^{N +1}H(\gamma) \prod _{n=1}^N  \house{\alpha_n}\right)^{D D_N} < 1
\end{equation}
for arbitrarily large values of $N$. 

The final inequality to be derived depend only on properties of the series $\sum_{n=N+1}^\infty \frac{1}{\house{\alpha_n}}$, a series of reciprocals of increasing real numbers, and so the completion of the proof is just a matter of calculus and in particular application of Lemmas \ref{lem:calculus1} and \ref{lem:calculus2}. 

In the following, we will assume that the degree of the $\alpha_n$ is bounded by some $d \in \mathbb{N}$. Consequently, we may replace $D_N$ in inequality to \eqref{eq:general} by $d^N$ and proceed to derive a contradiction to this statement. We will discuss this restriction further in the final section of the paper. In brief, what is required to proceed is an upper bound on the geometric growth rate of the product of the degrees.

Assume that $\house{\alpha_n} \ge n^{1+\epsilon}$, so that by Lemma \ref{lem:calculus1},
$$
\sum_{n= N+1}^\infty \frac{1}{\house{\alpha_n}}  < \frac{2+1/\epsilon}{\house{\alpha_{N+1}}^{\epsilon/(1+\epsilon)}}.
$$
To proceed, we need to find a lower bound on $a_{N+1}$ for infinitely many $N$. We split the remainder of the proof into three cases.

\emph{Case I:} Suppose that 
$$
\limsup_{n \rightarrow \infty} \house{\alpha_n}^{\frac{1}{D^n \prod_{i=1}^{n-1}((d+1)^i + d+1)}} = \infty.
$$
By Lemma \ref{lem:calculus2}, for infinitely many values of $N$, 
$$
 \house{\alpha_{N+1}}^{\frac{1}{D^{N+1}\prod_{i=1}^{N}((d+1)^i + d+1)}} \ge \left(1 + \frac{1}{N^2}\right) \max_{1 \le n \le N}  \house{\alpha_n}^{\frac{1}{D^n\prod_{i=1}^{n-1}((d+1)^i + d+1)}}.
 $$
 Note that 
 $$
 \log\left(1 + \frac{1}{N^2}\right) \ge \frac{2N^2 - 1}{2N^4},
 $$
 and that
 $$
 \prod_{n=1}^N((d+1)^n + d + 1) \frac{2N^2 - 1}{2N^4} \ge \log 2 (d+1)^N,
 $$
 so that 
 $$
 \left(1 + \frac{1}{N^2}\right)^{D^{N+1}\prod_{i=1}^{N}((d+1)^i + d+1)} > 2^{D^{N+1}(d+1)^N}.
 $$
 
 Hence, we find that for these infinitely many values of $N$,
 \begin{alignat*}{2}
 \house{\alpha_{N+1}} &\ge 2^{D^{N+1}(d+1)^N} \\
 &\quad \left(\max_{1 \le n \le N}  \house{\alpha_n}^{\frac{1}{D^n\prod_{i=1}^{n-1}((d+1)^i + d+1)}}\right)^{D^{N+1}\prod_{i=1}^{N}((d+1)^i + d+1)} \\
 & \ge \left( 2^{D^N} \house{\alpha_N}\right)^{D (d+1)^N} \\
 &\quad \left(\max_{1 \le n \le N}  \house{\alpha_n}^{\frac{1}{D^n\prod_{i=1}^{n-1}((d+1)^i + d+1)}}\right)^{D^{N+1}(d+1)\prod_{i=1}^{N-1}((d+1)^i + d+1)} \\
  & = \left( 2^{D^N} \house{\alpha_N}\right)^{D (d+1)^N} \\
 &\quad \left(\max_{1 \le n \le N}  \house{\alpha_n}^{\frac{1}{D^n\prod_{i=1}^{n-1}((d+1)^i + d+1)}}\right)^{D^{N+1}(d+1)^N\prod_{i=1}^{N-2}((d+1)^i + d+1)} \\
  &\quad \left(\max_{1 \le n \le N}  \house{\alpha_n}^{\frac{1}{D^n\prod_{i=1}^{n-1}((d+1)^i + d+1)}}\right)^{D^{N+1}(d+1)^2\prod_{i=1}^{N-2}((d+1)^i + d+1)} \\
 &\ge \cdots \ge \\
 &\ge \left(2^{D^N}\prod_{n=1}^N \house{\alpha_n}^{D^{N-n}}\right)^{D (d+1)^N}.
 \end{alignat*}
 Consequently, for these values of $N$,
 $$
 \sum_{n= N+1}^\infty \frac{1}{\house{\alpha_n}} \le (2+1/\epsilon) \left(2^{D^N}\prod_{n=1}^N \house{\alpha_n}^{D^{N-n}}\right)^{-(D (d+1)^N)\epsilon/(1+ \epsilon)}.
$$
Inserting this into the right hand side of \eqref{eq:general}, we find that
\begin{alignat*}{2}
\sum_{n=N+1}^\infty &\frac{1}{\house{\alpha_n}} \left(2^{N +1}H(\gamma) \prod _{n=1}^N  \house{\alpha_n}\right)^{D d^N} \\
&\le  (2+1/\epsilon) \left(2^{D^N} \prod_{n=1}^N \house{\alpha_n}^{D^{N-n}}\right)^{-(D (d+1)^N)\epsilon/(1+ \epsilon)}\left(2^{N +1}H(\gamma) \prod _{n=1}^N  \house{\alpha_n}\right)^{D d^N}.
\end{alignat*}
As $N$ can be arbitrarily large, this proves \eqref{eq:general} in the first case.

In the following, we will suppose 
$$
\limsup_{n \rightarrow \infty} \house{\alpha_n}^{\frac{1}{D^n \prod_{i=1}^{n-1}((d+1)^i + d+1)}} < \infty,
$$
but 
$$
\limsup_{n \rightarrow \infty} \house{\alpha_n}^{\frac{1}{D^n \prod_{i=1}^{n-1}(d^i + d)}} = \infty.
$$
By the first of these conditions, 
\begin{equation}
\label{eq:upper}
\house{\alpha_n} < 2^{(d+1)^{n^2}},
\end{equation}
for $n$ sufficiently large. We now deal with the remaining two cases.

\emph{Case II:} Suppose for this case that 
\begin{equation}
\label{eq:lower1}
\house{\alpha_n} \ge 2^{n}
\end{equation}
for all $n$ sufficiently large. We require an upper estimate on the tail, $\gamma(N)$.

For $N$ sufficiently large, write
$$
\vert \gamma(N)\vert \le \sum_{n=N+1}^\infty \frac{1}{\house{\alpha_n}} = \sum_{n=N+1}^{[\log_2 \house{\alpha_{N+1}}+1]} \frac{1}{\house{\alpha_n}} + \sum_{n =[\log_2 \house{\alpha_{N+1}}+1]+1}^\infty \frac{1}{\house{\alpha_n}}.
$$
We estimate the first sum by the maximum value times the number of summands, so that 
$$
\sum_{n=N+1}^{[\log_2 \house{\alpha_{N+1}}+1]} \frac{1}{\house{\alpha_n}} \le \frac{\log_2 \house{\alpha_{N+1}}+1}{\house{\alpha_{N+1}}},
$$
and the second sum by an integral using \eqref{eq:lower1}, so that
$$
\sum_{n =[\log_2 \house{\alpha_{N+1}}+1]+1}^\infty \frac{1}{\house{\alpha_n}} \le \int_{[\log_2 \house{\alpha_{N+1}}+1]}^\infty \frac{\textnormal{d}x}{2^x} \le \frac{1/\log 2}{\house{\alpha_{N+1}}}.
$$
In total, for $N$ large enough, we have on applying \eqref{eq:upper}
\begin{equation}
\label{eq:case2}
\vert \gamma(N) \vert \le \frac{2 \log_2 \house{\alpha_{N+1}}}{\house{\alpha_{N+1}}} < \frac{2 (d+1)^{(N+1)^2}}{\house{\alpha_{N+1}}} \le \frac{2^{dN^2}}{\house{\alpha_{N+1}}}.
\end{equation}

To arrive at a contradiction to \eqref{eq:critical}, we need a lower bound on $\house{\alpha_{N+1}}$ valid for $N$ arbitrarily large. Noting that 
$$
\log\left(1 + \frac{1}{N^2}\right) \prod_{n=1}^N (d^n + d) \ge \log 2 N^2 d^N,
$$
we may argue by Lemma \ref{lem:calculus2} as in the proof of case I to find that for infinitely many $N$,
\begin{alignat*}{2}
 \house{\alpha_{N+1}} &\ge 2^{D^{N+1}N^2 d^N} \left(\max_{1 \le n \le N}  \house{\alpha_n}^{\frac{1}{D^n\prod_{i=1}^{n-1}(d^i + d)}}\right)^{D^{N+1}\prod_{i=1}^{N}(d^i + d)} \\
 & \ge \left( 2^{D^NN^2} \house{\alpha_N}\right)^{D d^N} \left(\max_{1 \le n \le N}  \house{\alpha_n}^{\frac{1}{D^n\prod_{i=1}^{n-1}(d^i + d)}}\right)^{D^{N+1}d\prod_{i=1}^{N-1}(d^i + d)} \\
& = \left( 2^{D^NN^2} \house{\alpha_N}\right)^{D d^N} \left(\max_{1 \le n \le N}  \house{\alpha_n}^{\frac{1}{D^n\prod_{i=1}^{n-1}(d^i + d)}}\right)^{D^{N+1}d^N\prod_{i=1}^{N-2}(d^i + d)} \\
&\quad\quad \left(\max_{1 \le n \le N}  \house{\alpha_n}^{\frac{1}{D^n\prod_{i=1}^{n-1}(d^i + d)}}\right)^{D^{N+1}d^2\prod_{i=1}^{N-2}(d^i + d)} \\
&\ge \cdots \ge \\
&\ge \left(2^{D^NN^2}\prod_{n=1}^N \house{\alpha_n}^{D^{N-n}}\right)^{D d^N}.
 \end{alignat*}

In combination with \eqref{eq:case2}, we now find that for these infinitely many values of $N$,
\begin{alignat*}{2}
\vert \gamma(N) \vert &\left(2^{N +1}H(\gamma) \prod _{n=1}^N  \house{\alpha_n}\right)^{D d^N} \\
&\le 2^{dN^2} \left(2^{D^NN^2}\prod_{n=1}^N \house{\alpha_n}^{D^{N-n}}\right)^{- D d^N} \left(2^{N +1}H(\gamma) \prod _{n=1}^N  \house{\alpha_n}\right)^{D d^N}\\
& \le C 2^{d N^2 + D d^N(N+1) - D^{N+1}d^N N^2},
\end{alignat*}
where $C > 0$ is a constant such that 
$$
\left(\frac{H(\gamma) \prod _{n=1}^N  \house{\alpha_n}}{\prod_{n=1}^N \house{\alpha_n}^{D^{N-n}}}\right)^{D d^N} \le C,
$$
which may clearly be chosen. Unless $d=D=1$, we may choose $N$ so large that this is $< 1$, which contradicts \eqref{eq:critical}. When $d=D=1$, we are in the case considered by Erd\H{o}s \cite{MR539489}, and we already have the theorem.

\emph{Case III:} Assume now that \eqref{eq:lower1} fails for infinitely many values of $n$, i.e. that 
\begin{equation}
\label{eq:lower2}
\house{\alpha_n} < 2^{n},
\end{equation}
for infinitely many values of $n$. By assumption, for any $B$ large enough (to be fixed), there are infinitely many $n$ such that 
\begin{equation}
\label{eq:lower3}
\house{\alpha_n} \ge 2^{B D^n \prod_{i=1}^{n-1}(d^i + d)}.
\end{equation}
Let $s$ be the least natural number satisfying this inequality, and let $k \in \{1, \dots s\}$ be the largest number satisfying \eqref{eq:lower2}. Finally, by Lemma \ref{lem:calculus2}, there are infinitely many $n$ for which
$$
\house{\alpha_n} > \left(\left(1+\frac{1}{(n-1)^2}\right) \max_{1 \le i \le n-1} \house{\alpha_i}^{1/(D^i\prod_{j=1}^{i-1}(d^j + d))}\right)^{D^n \prod_{j=1}^{n-1} (d^j + d)}.
$$
Let $r \ge k$ be the smallest integer satisfying this inequality.

We claim first that $r \le s$. To see this, suppose for a contradiction that $s < r$. By choice of $r$, for any $t \in  \{k+1, \dots, r-1\}$,
$$
\house{\alpha_t} \le \left(\left(1+\frac{1}{(t-1)^2}\right) \max_{k \le i \le t-1} \house{\alpha_i}^{1/(D^i\prod_{j=1}^{i-1}(d^j + d))}\right)^{D^t \prod_{j=1}^{t-1} (d^j + d)}.
$$
Applying first \eqref{eq:lower3} and the definition of $s$, and subsequently the above inequality several times, noting that $k < s < r$, we find that
\begin{alignat}{2}
2^B &\le \house{\alpha_s}^{1/(D^s \prod_{j=1}^{s-1} (d^j + d))} \notag \\
&\le \left(1+\frac{1}{(s-1)^2}\right) \max_{k \le n \le s-1} \house{\alpha_n}^{1/(D^i\prod_{j=1}^{n-1}(d^j + d))} \notag \\
&\le \cdots \le \left(\prod_{j=k+1}^{s-1} \left(1 + \frac{1}{(j-1)^2}\right)\right) a_k^{1/(D^k\prod_{j=1}^{k-1}(d^j + d))} \notag \\
&< 2\prod_{j=k+1}^{s-1} \left(1 + \frac{1}{(j-1)^2}\right) = \frac{2 \sinh(\pi)}{\pi} < 8. \label{eq:intermediate}
\end{alignat}
On choosing $B \ge 3$, we obtain the desired contradiction, and we have $r \le s$.

To proceed, note that 
\begin{multline*}
\prod_{n=1}^{r-1}(d^n + d) \log\left(1 + \frac{1}{(r-1)^2}\right) \ge d^{\frac{1}{2} (r-1)^2 + \frac{1}{2} (r-1)} \frac{2(r-1)^2 - 1}{2(r-1)^4} \\
\ge d^{4r} \log 2.
\end{multline*}
Arguing by Lemma \ref{lem:calculus2} as in the preceding cases,
\begin{alignat}{2}
\notag \house{\alpha_{r}} &\ge 2^{D^{r}d^{4r}} \left(\max_{1 \le n \le r-1}  \house{\alpha_n}^{\frac{1}{D^n\prod_{i=1}^{n-1}(d^i + d)}}\right)^{D^{r}\prod_{i=1}^{r-1}(d^i + d)} \\
\label{eq:alphar} &\ge 2^{D^rd^{4r}} \left(\prod_{n=1}^{r-1} \house{\alpha_n}\right)^{D^r d^{r-1}}.
\end{alignat}
With the argument leading to \eqref{eq:intermediate}, we find that for $t \in {k+1, \dots, r-1}$,
$$
\house{\alpha_t}^{1/(D^t \prod_{j=1}^{t-1} (d^j + d))} \le 8.
$$
Since the sequence $\house{\alpha_n}$ increases, the above estimates imply by choice of $k$ that
$$
\prod_{i=1}^{r-1} \house{\alpha_i} = \prod_{i=1}^{k} \house{\alpha_i}\prod_{i=k+1}^{r-1} \house{\alpha_i} \le 2^{k^2} \prod_{i=k+1}^{r-1} 8^{D^i \prod_{j=1}^{i-1} (d^j + d)}.
$$
Note that this immediately implies that  for some $C > 0$ independent of $k$, 
\begin{equation}
\label{eq:product}
\prod_{i=1}^{r-1} \house{\alpha_i} \le C^{D^{r-1} \prod_{j=1}^{r-2} (d^j + d)}.
\end{equation}

To finish the proof, we consider $\vert \gamma(r-1) \vert$ by splitting it into three sums,
$$
\vert \gamma(r-1) \vert \le \sum_{n=r}^\infty \frac{1}{\house{\alpha_n}} = \sum_{n=r}^{[\log_2\house{\alpha_r}]}\frac{1}{\house{\alpha_n}} + \sum_{n=[\log_2\house{\alpha_r}+1]}^{s-1} \frac{1}{\house{\alpha_n}} + \sum_{n=s}^\infty \frac{1}{\house{\alpha_n}}.
$$
For the first sum, as $\house{\alpha_n}$ increases, 
$$
\sum_{n=r}^{[\log_2\house{\alpha_r}]}\frac{1}{\house{\alpha_n}} \le \frac{\log_2\house{\alpha_r}}{\house{\alpha_r}}.
$$
In the summation range of the second sum, $\house{\alpha_n} \ge 2^n$, so
$$
 \sum_{n=[\log_2\house{\alpha_r}+1]}^{s-1} \frac{1}{\house{\alpha_n}} \le  \sum_{n=[\log_2\house{\alpha_r}+1]}^\infty \frac{1}{2^n} = \frac{2}{2^{[\log_2\house{\alpha_r}+1]}} \le \frac{2}{\house{\alpha_r}} \le\frac{\log_2 \house{\alpha_r}}{\house{\alpha_r}} .
 $$
 The third sum is estimated by Lemma \ref{lem:calculus1},
 $$
\sum_{n=s}^\infty \frac{1}{\house{\alpha_n}} \le \frac{2+1/\epsilon}{\house{\alpha_s}^{\epsilon/(1+\epsilon)}}.
$$
Hence,
$$
\vert \gamma(r-1) \vert \le \frac{2\log_2 \house{\alpha_r}}{\house{\alpha_r}} + \frac{2+1/\epsilon}{\house{\alpha_s}^{\epsilon/(1+\epsilon)}}.
$$

Applying \eqref{eq:upper} and \eqref{eq:alphar} to the first summand,
\begin{multline*}
 \frac{2\log_2 \house{\alpha_r}}{\house{\alpha_r}} \le  \frac{2(d+1)^{r^2}}{\house{\alpha_r}} 
 < (d+1)^{r^2} 2^{1-D^rd^{4r}} \left(\prod_{n=1}^{r-1} \house{\alpha_n}\right)^{-D^r d^{r-1}} \\<  2^{-D^rd^{3r}} \left(\prod_{n=1}^{r-1} \house{\alpha_n}\right)^{-D^r d^{r-1}}.
 \end{multline*}
 For the second summmand, by choice of $s$,
$$
\frac{2+1/\epsilon}{\house{\alpha_s}^{\epsilon/(1+\epsilon)}} \le \frac{2+1/\epsilon}{(2^{BD^s\prod_{i=1}^{s-1}(d^i + d)})^{\epsilon/(1+\epsilon)}}.
$$
We claim that these inequalities will contradict \eqref{eq:critical} whenever $r$ is large enough.

To see this, note that
\begin{multline*}
\vert(\gamma(r-1))\vert \left(2^{r}H(\gamma) \prod _{n=1}^{r-1}  \house{\alpha_n}\right)^{D d^{r-1}} 
 < \Bigg(2^{-D^rd^{3r}} \left(\prod_{n=1}^{r-1} \house{\alpha_n}\right)^{-D^r d^{r-1}} \\
 + \frac{2+1/\epsilon}{(2^{BD^s\prod_{i=1}^{s-1}(d^i + d)})^{\epsilon/(1+\epsilon)}}\Bigg) \left(2^{r}H(\gamma) \prod _{n=1}^{r-1}  \house{\alpha_n}\right)^{D d^{r-1}}.
\end{multline*}

Now, 
$$
2^{-D^rd^{3r}} \left(\prod_{n=1}^{r-1} \house{\alpha_n}\right)^{-D^r d^{r-1}} \left(2^{r}H(\gamma) \prod _{n=1}^{r-1}  \house{\alpha_n}\right)^{D d^{r-1}}
$$
clearly tends to $0$ as $r$ increases. For the second summand, we use \eqref{eq:product}, so that 
\begin{multline*}
\frac{2+1/\epsilon}{(2^{BD^s\prod_{i=1}^{s-1}(d^i + d)})^{\epsilon/(1+\epsilon)}} \left(2^{r}H(\gamma) \prod _{n=1}^{r-1}  \house{\alpha_n}\right)^{D d^{r-1}} \\
\le \frac{2+1/\epsilon}{(2^{BD^s\prod_{i=1}^{s-1}(d^i + d)})^{\epsilon/(1+\epsilon)}} \left(2^{r}H(\gamma) C^{D^{r-1} \prod_{j=1}^{r-2} (d^j + d)} \right)^{D d^{r-1}},
\end{multline*}
which evidently also tends to zero as $r$ increases, on noting that $s \ge r$.

We now have a contradiction to \eqref{eq:critical} for a single value of $r$, provided the value of $r$ produced above is large enough. However, we need infinitely many counterexamples to prove the result. This is easy. Recall that $k \le r \le s$, and note that if we increase $B$, $s$ will increase. Hence, $k$ will increase, considering how $k$ was chosen. It follows that on increasing $B$, we will obtain infinitely many values of $r$ so that the above holds. This is in contradiction with \eqref{eq:critical} and completes the proof \qed

\section{Concluding remarks}

We end our paper with some concluding remarks on possible extensions of our result. As the reader will have noticed, we have made quite a few assumptions in our proof, which could be weakened. These have been made in order to simplify the statement and presentation. We comment on these individually.

The first observation is concerned with the $\alpha_n$ being algebraic integers. In fact, this is needed only in the application of Lemma \ref{lem:height_house_measure}. The assumption can be replaced by any inequality relating $\vert \alpha_n \vert$ to $M(\alpha_n)$. Note that in the same spirit, we could replace the assumption that $\vert \alpha_n \vert = \house{\alpha_n}$ with a similar relation. Of course, this would change the growth condition for the \emph{limsup}.

The second observation concerns the assumption on the upper bound on $\deg(\alpha_n)$ being fixed. In our statement of the main theorem, we introduce a uniform bound on the degrees of the numbers $\alpha_n$. This allows us to replace $D_N = \prod_{n=1}^N \deg(\alpha_n)$ in \eqref{eq:critical} by $d^N$ throughout, which simplifies calculations enormously. In the interest of clarity, we will not pursue this any further here, but with additional care, it should be possible to obtain a growth estimate depending on $D_N$ rather than $d^N$.

The third observation concerns the assumption that $\Re(\alpha_n)> 0$ or $\Im(\alpha_n) > 0$ for all $n \in \mathbb{N}$. The only purpose of this assumption is to ensure that $\gamma(N) \neq 0$ for all $N \in \mathbb{N}$. Any other assumption ensuring this would be sufficient to ensure the conclusion of Theorem \ref{thm:main} with the remaining assumptions being unaffected. Nevertheless, it is entirely possible to construct a sequence of algebraic integers satisfying all the other assumptions of the theorem, such that the resulting series is rational.

Our final observation concerns the special case when all $\alpha_n$ are Pisot or Salem numbers, so that $M(\alpha_n) = \house{\alpha_n}$. In our proof of the main theorem, we applied only the first inequality in Lemma \ref{lem:height_house_measure} in our derivation of \eqref{eq:critical}. While this inequality is true and best possible for all algebraic integers, if one specialises to all the $\alpha_n$ being Pisot or Salem numbers, a stronger inequality is obtained, namely that
\begin{equation}
\label{eq:critical_PS}
\vert(\gamma(N))\vert \left(2^{N +1}H(\gamma) \prod _{n=1}^N  \house{\alpha_n}^{1/\deg(\alpha_n)}\right)^{D D_N} \ge 1.
\end{equation}
This would lead to a weaker assumption on the growth of the sequence of $\house{\alpha_n}$.

\providecommand{\bysame}{\leavevmode\hbox to3em{\hrulefill}\thinspace}
\providecommand{\MR}{\relax\ifhmode\unskip\space\fi MR }
\providecommand{\MRhref}[2]{%
  \href{http://www.ams.org/mathscinet-getitem?mr=#1}{#2}
}
\providecommand{\href}[2]{#2}


\begin{thebibliography}{1}

\bibitem{MR2136100}
Y.~Bugeaud, \emph{Approximation by algebraic numbers}, Cambridge Tracts in
  Mathematics, vol. 160, Cambridge University Press, Cambridge, 2004.

\bibitem{MR539489}
P.~Erd\H{o}s, \emph{Some problems and results on the irrationality of the sum
  of infinite series}, J. Math. Sci. \textbf{10} (1975), 1--7 (1976).

\bibitem{MR2061123}
J.~Han\v{c}l, \emph{A criterion for linear independence of series}, Rocky
  Mountain J. Math. \textbf{34} (2004), no.~1, 173--186.

\bibitem{MR3705763}
J.~Han\v{c}l and R.~Nair, \emph{On the irrationality of infinite series of
  reciprocals of square roots}, Rocky Mountain J. Math. \textbf{47} (2017),
  no.~5, 1525--1538.

\bibitem{MR0258803}
I.~M. Isaacs, \emph{Degrees of sums in a separable field extension}, Proc.
  Amer. Math. Soc. \textbf{25} (1970), 638--641.

\bibitem{liouville}
J.~Liouville, \emph{Nouvelle d{\'e}monstration d'un th{\'e}or{\`e}me sur les
  irrationelles alg{\'e}briques, ins{\'e}r{\'e} dans le compte rendu de la
  derni{\`e}re s{\'e}ance}, Comptes rendus de l'Acad{\'e}mie des Sciences
  (1844), no.~1, 910--911.

\bibitem{MR554376}
M.~Mignotte, \emph{Approximation des nombres alg{\'e}briques par des nombres
  alg{\'e}briques de grand degr{\'e}}, Ann. Fac. Sci. Toulouse Math. (5)
  \textbf{1} (1979), no.~2, 165--170.

\bibitem{MR1756786}
M.~Waldschmidt, \emph{Diophantine approximation on linear algebraic groups},
  Grundlehren der Mathematischen Wissenschaften [Fundamental Principles of
  Mathematical Sciences], vol. 326, Springer-Verlag, Berlin, 2000,
  Transcendence properties of the exponential function in several variables.

\end{thebibliography}
\end{document}